\title{Gr\"{o}bner basis and singular locus of Lauricella's hypergeometric differential equations}
\author{Hiromasa Nakayama (Kobe University/JST CREST) 
\thanks{nakayama@math.kobe-u.ac.jp}
}
\newcommand{\C}{\mathbb{C}}
\newcommand{\R}{\mathbb{R}}
\newcommand{\inn}{{\rm in}}
\newcommand{\p}{\partial}
\newcommand{\Dan}{\mathcal{D}}
\newcommand{\Zzero}{\mathbb{Z}_{\geq 0}}
\newcommand{\OrdZeroOne}{{<_{({\bf 0}, {\bf 1})}}}
\newcommand{\rest}{{\rm rest}}
\newcommand{\WZeroOne}{({\bf 0}, {\bf 1})}
\newcommand{\innZeroOne}{\inn_{\WZeroOne}}
\newtheorem{proposition}{Proposition}
\newtheorem{theorem}[proposition]{Theorem}
\newtheorem{lemma}[proposition]{Lemma}
\newtheorem{remark}[proposition]{Remark}
\begin{document}
  \maketitle

\section{Introduction}

Lauricella's hypergeometric series $F_A, F_B, F_C$ are defined by 
\begin{align*}
& F_A(a, b_1, \ldots, b_m, c_1, \ldots, c_m ; x_1, \ldots, x_m) = \\ 
& \quad \quad \sum_{n_1, \ldots, n_m \in \Zzero} 
\frac{(a)_{n_1 + \cdots + n_m} (b_1)_{n_1} \cdots (b_m)_{n_m}}
{(c_1)_{n_1} \cdots (c_m)_{n_m} (1)_{n_1} \cdots (1)_{n_m}} x_1^{n_1} \cdots x_m^{n_m}, 
\end{align*}
\begin{align*}
& F_B(a_1, \ldots, a_m, b_1, \ldots, b_m, c ; x_1, \ldots, x_m) = \\
& \quad \quad \sum_{n_1, \ldots, n_m \in \Zzero} 
\frac{(a_1)_{n_1} \cdots (a_m)_{n_m} (b_1)_{n_1} \cdots (b_m)_{n_m}}
{(c)_{n_1 + \cdots + n_m} (1)_{n_1} \cdots (1)_{n_m}} x_1^{n_1} \cdots x_m^{n_m},
\end{align*}
\begin{align*}
& F_C(a, b, c_1, \ldots, c_m ; x_1, \ldots, x_m) = \\
& \quad \quad \sum_{n_1, \ldots, n_m \in \Zzero} 
\frac{(a)_{n_1+ \cdots +n_m} (b)_{n_1 + \cdots + n_m}}
{(c_1)_{n_1} \cdots (c_m)_{n_m} (1)_{n_1} \cdots (1)_{n_m}} x_1^{n_1} \cdots x_m^{n_m}. 
\end{align*}
Here, 
$a, b, c, a_i, b_i, c_i (i = 1, \ldots, m)$ are parameters and   
$c, c_i \notin \mathbb{Z}_{\leq 0}$.
Lauricella's series $F_A, F_B, F_C$ satisfy the following systems of 
differential equations respectively
$$
\ell_i^A \cdot F_A = 0,  
\ell_i^A = \theta_i (\theta_i + c_i - 1) - x_i(\theta_1 + \cdots +
 \theta_m + a)(\theta_i + b_i) ~ 
(i = 1, \ldots, m), 
$$
$$ 
\ell_i^B \cdot F_B = 0,  
\ell_i^B = \theta_i (\theta_1 + \cdots + \theta_m + c - 1) - 
x_i (\theta_i + a_i)(\theta_i + b_i) ~
(i = 1, \ldots, m), 
$$ 
$$
\ell_i^C \cdot F_C = 0, 
\ell_i^C = \theta_i(\theta_i + c_i - 1) - x_i (\theta_1 + \cdots +
\theta_m + a)(\theta_1 + \cdots + \theta_m + b) 
(i = 1, \ldots, m).
$$
Here, 
$\p_i = \frac{\p}{\p x_i}$ is the differential operator for $x_i$ and  
$\theta_i = x_i \p_i$ is Euler operator for $x_i$.

Let 
$D = \C[x_1, \ldots, x_m] \langle \p_1, \ldots, \p_m \rangle$
be the ring of differential operators with polynomial coefficients and 
$\Dan = \C\{x_1, \ldots, x_m\} \langle \p_1, \ldots, \p_m \rangle$ 
be the ring of differential operators with convergent power series coefficients.
We define $D$ ideals 
\begin{align*}
& I_A(m) = D \cdot \{\ell_i^A ~|~ i = 1, \ldots, m \}, \\
& I_B(m) = D \cdot \{\ell_i^B ~|~ i = 1, \ldots, m \}, \\ 
& I_C(m) = D \cdot \{\ell_i^C ~|~ i = 1, \ldots, m \}, 
\end{align*}
and $\Dan$ ideals
$$
\mathcal{I}_A(m) = \Dan \cdot \{\ell_i^A ~|~ i = 1, \ldots, m \}, \quad
\mathcal{I}_C(m) = \Dan \cdot \{\ell_i^C ~|~ i = 1, \ldots, m \}.
$$
We note that these ideals can be defined without any condition on parameters.
We will call these ideals Lauricella's systems of differential equations.

In this paper, 
we obtain Gr\"{o}bner bases for these ideals $I_B(m), \mathcal{I}_A(m), \mathcal{I}_C(m)$
with respect to some monomial orders without any condition on parameters.
By utilizing these Gr\"{o}bner bases and by Oaku's celebrating result on Gr\"{o}bner basis, 
characteristic varieties and singular locus \cite{OSSingLocus}, \cite{OakuSingLocus}, 
we will determine the singular locus of Lauricella's system of differential equations 
$I_t(m)$ where $t$ is $A$ or $B$ or $C$.

The singular locus under some conditions on parameters is known for these equations
(see the survey by Matsumoto \cite{M2}).
However, the singular locus without any condition on parameters has not been known.
We note that Hattori and Takayama \cite{HT} determined the singular locus of 
the system $I_C(m)$ recently without any assumption on parameters by utilizing
Gr\"{o}bner basis, syzygies and cohomological solutions.
Our method also utilizes Gr\"{o}bner basis, but it is simpler by considering Gr\"{o}bner 
bases in the ring $\Dan$ and can be applied to other Lauricella's systems.
We will use notations of \cite{SST} throughout this paper.

\section{Gr\"{o}bner basis for Lauricella's hypergeometric differential equations}
We derive a Gr\"{o}bner basis for the $D$ ideal $I_B(m)$.
\begin{theorem} \label{thm-gr-b}
Let $\xi_i$ be a commutative variable corresponding to $\p_i$.
We define a term order $\OrdZeroOne$ as follows.
The relation
$$ x_1^{\alpha_1} \cdots x_m^{\alpha_m} \xi_1^{\beta_1} \cdots \xi_m^{\beta_m} 
   \OrdZeroOne x_1^{\alpha_1'} \cdots x_m^{\alpha_m'} \xi_1^{\beta_1'} \cdots \xi_m^{\beta_m'} $$
holds if and only if one of the following cases holds:
\begin{enumerate}
\item $\beta_1 + \cdots + \beta_m < \beta_1' + \cdots + \beta_m'$ 
\item $\beta_1 + \cdots + \beta_m = \beta_1' + \cdots + \beta_m'$ and  
      $\alpha_1 + \cdots + \alpha_m < \alpha_1' + \cdots + \alpha_m'$
\item $\beta_1 + \cdots + \beta_m = \beta_1' + \cdots + \beta_m'$ and  
      $\alpha_1 + \cdots + \alpha_m = \alpha_1' + \cdots + \alpha_m'$ and \\
      $x_1^{\alpha_1} \cdots x_m^{\alpha_m} \xi_1^{\beta_1} \cdots \xi_m^{\beta_m} 
      <'x_1^{\alpha_1'} \cdots x_m^{\alpha_m'} \xi_1^{\beta_1'} \cdots \xi_m^{\beta_m'}$. 
      Here, $<'$ is a term order such as the lexicographic order.
\end{enumerate} 
Then, the set $\{\ell_1^B, \ldots, \ell_m^B \}$ is a Gr\"{o}bner basis 
for $I_B(m)$ with respect to $\OrdZeroOne$.
\end{theorem}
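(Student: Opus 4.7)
The plan is to determine the $\OrdZeroOne$-leading monomial of each $\ell_i^B$ and then verify Buchberger's S-pair criterion in the Weyl algebra.

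\textbf{Leading monomials.} Writing $\theta_i = x_i\partial_i$ and using $\partial_i x_i = x_i\partial_i+1$, one expands
\begin{align*}
\ell_i^B = -x_i^3\partial_i^2 + \Bigl(x_i^2\partial_i^2 + \sum_{j\ne i} x_ix_j\partial_i\partial_j\Bigr) + c\,x_i\partial_i - (1+a_i+b_i)\,x_i^2\partial_i - a_ib_i\,x_i.
\end{align*}
The term $-x_i^3\partial_i^2$ has $\xi$-degree $2$ and $x$-degree $3$, while the other $\xi$-degree-$2$ terms (those in parentheses) all have $x$-degree $2$. By the definition of $\OrdZeroOne$ (compare first by $\xi$-degree, then by $x$-degree, then by $<'$), this forces $\lt_{\OrdZeroOne}(\ell_i^B) = -x_i^3\xi_i^2$.

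\textbf{S-pair reduction.} By the Buchberger criterion for $(0,1)$-refined orders in the Weyl algebra (SST, Prop.~1.1.6), it suffices to show that each S-polynomial $S(\ell_i^B,\ell_j^B) = -x_j^3\partial_j^2\,\ell_i^B + x_i^3\partial_i^2\,\ell_j^B$ ($i\ne j$) reduces to $0$ modulo $\{\ell_k^B\}$. The leading monomials $x_i^3\xi_i^2$ and $x_j^3\xi_j^2$ involve pairwise disjoint variables, so the essential step takes place in the associated graded ring $\C[x,\xi]$. There the $(0,1)$-symbols
\[
\sigma(\ell_k^B) = x_k\xi_k\,E - x_k^3\xi_k^2, \qquad E := x_1\xi_1 + \cdots + x_m\xi_m,
\]
have coprime leading monomials, so by Buchberger's first criterion they form a commutative Gr\"obner basis of $\langle\sigma(\ell_k^B)\rangle$. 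A direct calculation yields $\sigma\bigl(S(\ell_i^B,\ell_j^B)\bigr) = x_i\xi_i\,E\cdot\sigma(\ell_j^B) - x_j\xi_j\,E\cdot\sigma(\ell_i^B)$ in $\C[x,\xi]$. Lifting this identity to $D$ via the correspondence $x_k\xi_k \leftrightarrow \theta_k$, together with the commutator identity $[\ell_i^B,\ell_j^B]=\theta_i\ell_j^B-\theta_j\ell_i^B$ (immediate from $[\theta_i,x_j]=0$ for $i\ne j$ and the mutual commutativity of the $\theta_k$), produces operators $Q_i,Q_j\in D$ for which $S(\ell_i^B,\ell_j^B) - Q_i\ell_i^B - Q_j\ell_j^B$ has strictly smaller $(0,1)$-order than $S(\ell_i^B,\ell_j^B)$. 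Iterating this procedure degree by degree drives the residue to $0$.

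\textbf{Main obstacle.} The central technical difficulty is the lifting from the commutative Gr\"obner basis in $\C[x,\xi]$ to a bona fide reduction in the Weyl algebra. Whenever $x_j^3\partial_j^2$ is commuted past $\theta_j$ inside $\ell_i^B$, the Leibniz rule generates a cascade of lower-order correction terms whose coefficients depend on $a_i,b_i,c$, and each of these must be absorbed by a further multiple of some $\ell_k^B$ at a later stage of the descent on $(0,1)$-order. The commutative argument (based on coprimality of the leading monomials) supplies the structural framework, ensuring at every step that the residue's $(0,1)$-initial form again lies in $\langle\sigma(\ell_k^B)\rangle$; what remains is the disciplined but tedious bookkeeping of parameter-dependent coefficients as one iterates the reduction.
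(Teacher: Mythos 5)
Your determination of the $\OrdZeroOne$-leading monomial $x_i^3\xi_i^2$ is correct, and the commutator identity $[\ell_i^B,\ell_j^B]=\theta_i\ell_j^B-\theta_j\ell_i^B$ is a genuinely nice observation (it is correct: since $\ell_i^B=\theta_iE-x_i(\theta_i+a_i)(\theta_i+b_i)$ with $E=\theta_1+\cdots+\theta_m+c-1$, and $\theta_i,\theta_j,E$ all commute while $[\theta_jE,x_i]=x_i\theta_j$, a short computation gives exactly this). However, your reduction argument stops short of a proof. You explicitly acknowledge this: the ``cascade of lower-order correction terms'' must be absorbed ``at a later stage of the descent,'' and you describe the remaining work as ``disciplined but tedious bookkeeping.'' That is precisely the part that needs to be done, and the lifting-from-symbols argument you sketch is not routine — knowing that the $(0,1)$-symbols form a commutative Gr\"obner basis does \emph{not} by itself imply that the original operators form a Gr\"obner basis in $D$; establishing such a lifting in general is a nontrivial theorem, not an observation.

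The missing idea is what the paper isolates as Lemma~\ref{lem-cri-d}: when $\inn_<(P)$ and $\inn_<(Q)$ are coprime, the computation
\[
S_<(P,Q)=\inn_<(Q)(x,\p)\,P-\inn_<(P)(x,\p)\,Q
=-\rest_<(Q)\,P+\rest_<(P)\,Q-[P,Q]
\]
shows that $S_<(P,Q)$ reduces by $P$ and $Q$ directly to $-[P,Q]$, with no cascade to track. This single identity replaces your entire degree-by-degree descent. Once you have it, your commutator identity finishes the job immediately: $[\ell_i^B,\ell_j^B]=\theta_i\ell_j^B-\theta_j\ell_i^B$ is a sum of left multiples of the generators, each of which reduces to $0$ (the paper instead expands the commutator as $x_i(\theta_i+a_i)(\theta_i+b_i)\theta_j-x_j(\theta_j+a_j)(\theta_j+b_j)\theta_i$ and reduces term by term; your form is cleaner). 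So the structure of a correct proof is present in your write-up, but the crucial mechanism that makes the S-pair reduction actually terminate — and which removes the ``central technical difficulty'' you name — is absent. Without it, or without actually carrying out the bookkeeping you defer, the argument is incomplete.
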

In order to prove the theorem, we use the Buchberger's criterion.
In other words, we prove that any S-pair is reduced to 0.
We will use the following lemma to simplify S-pairs.
\begin{lemma} \label{lem-cri-d}
Let $P, Q \in D$ and $<$ be a term order on $D$.
If the initial terms $\inn_<(P)$ and $\inn_<(Q)$ are relatively prime, 
the $S$-pair of $P$ and $Q$ $S_<(P, Q)$ is reduced to the commutator $-[P,Q]$.
\end{lemma}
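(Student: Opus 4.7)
My plan is to do a direct calculation: expand the definition of the $S$-pair, use the coprimality of the leading monomials to identify $-[P,Q]$ as the remainder, and check that the discrepancy between $S_<(P,Q)$ and $-[P,Q]$ is built from left multiples of $P$ and $Q$ whose summands are removable by standard reduction.

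Write $P = c_P m_P + P'$ and $Q = c_Q m_Q + Q'$, where $c_P m_P = \inn_<(P)$, $c_Q m_Q = \inn_<(Q)$ and $P', Q'$ collect the strictly lower terms. Since $m_P$ and $m_Q$ are relatively prime in the commutative polynomials in $x_i,\xi_i$, one has $\operatorname{lcm}(m_P,m_Q) = m_P m_Q$, so by definition
$$S_<(P,Q) \;=\; \tfrac{1}{c_P}\, m_Q\, P \;-\; \tfrac{1}{c_Q}\, m_P\, Q.$$
Rather than push $\partial$'s past $x$'s directly, I would substitute $c_P m_P = P - P'$ and $c_Q m_Q = Q - Q'$, yielding the clean identity
$$c_P c_Q\, S_<(P,Q) \;=\; (Q - Q')P \;-\; (P - P')Q \;=\; -[P,Q] \;+\; P'Q \;-\; Q'P.$$
Thus $S_<(P,Q)$ differs from $-\tfrac{1}{c_P c_Q}[P,Q]$ by the element $\tfrac{1}{c_P c_Q}(P'Q - Q'P)$, which visibly lies in the left ideal generated by $\{P,Q\}$.

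The remaining task is to verify that $\tfrac{1}{c_P c_Q}(P'Q - Q'P)$ is a \emph{standard representation} in the sense of the Buchberger algorithm, so that the reduction really turns $S_<(P,Q)$ into $-[P,Q]$ up to the scalar absorbed into the monic normalization. Since $<$ is a term order on $D$ compatible with the filtration whose associated graded ring is the commutative polynomial ring $\C[x,\xi]$, the leading monomial of a product equals the product of the leading monomials; using $\inn_<(P') < m_P$ and $\inn_<(Q') < m_Q$ this gives $\inn_<(t \cdot Q) = \inn_<(t)\cdot m_Q < m_P m_Q$ for every monomial $t$ appearing in $P'$, and symmetrically for $Q'$. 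Expanding $P'$ and $Q'$ term by term therefore exhibits $P'Q - Q'P$ as a sum of left reductions by $P$ and $Q$, each of whose leading terms sits strictly below $m_P m_Q = \operatorname{lcm}(\inn_<(P),\inn_<(Q))$, which is the regime in which the division algorithm operates on an $S$-pair.

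I expect the main obstacle to be precisely this bookkeeping: confirming that the successive reductions applied to the correction $P'Q - Q'P$ are legitimate, i.e.\ the leading monomial never exceeds the leading monomial of the element being reduced. Once that is in place, the identity above gives the lemma — the surviving remainder of the division algorithm applied to $S_<(P,Q)$ is exactly $-[P,Q]$.
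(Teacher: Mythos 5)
Your proposal is correct and takes essentially the same route as the paper: write each operator as its normally ordered leading monomial plus a lower tail, substitute into the $S$-pair (whose cofactors are exactly $\inn_<(Q)(x,\partial)$ and $\inn_<(P)(x,\partial)$ by coprimality), and read off $-[P,Q]$ plus the correction $P'Q - Q'P$ that lies visibly in the left ideal. The paper avoids the scalar bookkeeping by normalizing the leading coefficients to $1$, and is terser about why the correction is removable by the division algorithm; you spell out that $\inn_<(tQ)=\inn_<(t)\,m_Q < m_P m_Q$ for every monomial $t$ of $P'$ (and symmetrically), which is the substance the paper leaves implicit. Same decomposition, same conclusion.
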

\begin{proof}
We may assume that the coefficients of the initial terms of $P$ and $Q$ are 1 
without loss of generality.
Since the initial terms $\inn_<(P)$ and $\inn_<(Q)$ are relatively prime,
we have  
\begin{align*}
S_<(P, Q) &= (\inn_<(Q)(x,\p)) P - (\inn_<(P)(x,\p)) Q \\
          &= (Q - \rest_<(Q)) P - (P - \rest_<(P)) Q \\
          &= -\rest_<(Q) P + \rest_<(P) Q + QP - PQ \\ 
          &= -\rest_<(Q) P + \rest_<(P) Q - [P, Q]. 
\end{align*} 
When  
$\inn_<(P) = x_1^{\alpha_1} \cdots x_m^{\alpha_m} \xi_1^{\beta_1} \cdots \xi_m^{\beta_m}$, 
we define 
$$\inn_<(P)(x,\p) = x_1^{\alpha_1} \cdots x_m^{\alpha_m} \p_1^{\beta_1} \cdots \p_m^{\beta_m}$$
and   
$\rest_<(P) = P - \inn_<(P)(x,\p)$.
The $S$-pair $S_<(P, Q)$ is reduced to the commutator $-[P,Q]$ by $P$ and $Q$. 
\end{proof}
\begin{proof} {\it (of Theorem \ref{thm-gr-b})}
We need to show that the $S$-pair of $\ell_i^B, \ell_j^B ~ (1 \leq i < j \leq m)$ is reduced to $0$.
Since the initial terms $\inn_\OrdZeroOne(\ell_i^B) = x_i^3 \xi_i^2$ and 
$\inn_\OrdZeroOne(\ell_j^B) = x_j^3 \xi_j^2$ are relatively prime, 
we can use Lemma \ref{lem-cri-d}.
The commutator of $\ell_i^B$ and $\ell_j^B$ is 
\begin{align*}
[\ell_i^B, \ell_j^B] &= \ell_i^B \ell_j^B - \ell_j^B \ell_i^B  \\
 &= x_i (\theta_i + a_i) (\theta_i + b_i) \theta_j - x_j (\theta_j + a_j)(\theta_j + b_j) \theta_i \\
 &\xrightarrow[\ell_i^B, \ell_j^B]{*} \theta_i(\theta_1 + \cdots + \theta_m + c - 1) \theta_j 
   - \theta_j (\theta_1 + \cdots + \theta_m + c - 1) \theta_i = 0, 
\end{align*}
where
$\xrightarrow[\ell_i^B, \ell_j^B]{*}$ means the reduction by $\ell_i^B$ and $\ell_j^B$.
Since the commutator is reduced to $0$,
the $S$-pair $S_\OrdZeroOne(\ell_i^B, \ell_j^B)$ is reduced to $0$
by Lemma \ref{lem-cri-d}. 
By the Buchberger's criterion, 
the set $\{\ell_1^B, \ldots, \ell_m^B\}$ is a Gr\"{o}bner basis with respect to $\OrdZeroOne$.
\end{proof}
\begin{remark} \label{rem-gr-b}
In Theorem \ref{thm-gr-b}, 
we obtain a Gr\"{o}bner basis of $I_B(m)$ with respect to the term order $\OrdZeroOne$.
We are interested in the set of term orders for which the set of generators
$\{\ell_1^B, \ldots, \ell_m^B\}$ is a Gr\"{o}bner basis.
Let us determine the weight vector $w$ and the tie-breaker order such that 
the term order $<_w$ satisfies $\inn_{<_w}(\ell_i^B) = x_i^3 \xi_i^2$.
This condition yields the condition that the weight vector 
$w = (w_1, \ldots, w_m, w_{m+1}, \ldots, w_{2m}) \in (\R_{\geq 0})^{2m}$ 
satisfies
\begin{equation} \label{w-cone}
w_i > 0,~ w_{m+i} \geq 0,~ 2 w_i - w_k + w_{m+i} - w_{m+k} > 0 ~ 
(1 \leq k \leq m \text{ and } k \neq i)
\end{equation}
for $i = 1, \ldots, m$. 
We define 
$$ x_1^{\alpha_1} \cdots x_m^{\alpha_m} \xi_1^{\beta_1} \cdots \xi_m^{\beta_m} 
   <_w x_1^{\alpha_1'} \cdots x_m^{\alpha_m'} \xi_1^{\beta_1'} \cdots \xi_m^{\beta_m'} $$
if and only if one of the following cases holds:
\begin{enumerate}
\item $w_ 1 \alpha_1 + \cdots + w_m \alpha_m + w_{m+1} \beta_1 + \cdots + w_{2m} \beta_m 
       < w_ 1 \alpha_1' + \cdots + w_m \alpha_m' + w_{m+1} \beta_1' + \cdots + w_{2m} \beta_m'$ 
\item $w_ 1 \alpha_1 + \cdots + w_m \alpha_m + w_{m+1} \beta_1 + \cdots + w_{2m} \beta_m 
       = w_ 1 \alpha_1' + \cdots + w_m \alpha_m' + w_{m+1} \beta_1' + \cdots + w_{2m} \beta_m'$ 
      and 
      $x_1^{\alpha_1} \cdots x_m^{\alpha_m} \xi_1^{\beta_1} \cdots \xi_m^{\beta_m} 
      < x_1^{\alpha_1'} \cdots x_m^{\alpha_m'} \xi_1^{\beta_1'} \cdots \xi_m^{\beta_m'}$. 
      Here, $<$ is a term order such as the lexicographic order.
\end{enumerate} 
We can prove that the set of generators $\{\ell_1^B, \ldots, \ell_m^B\}$ 
is a Gr\"{o}bner basis with respect to the term order $<_w$. 
We note that the weight vector $(0, \ldots, 0, 1, \ldots, 1)$ lies in 
the closure of the cone (\ref{w-cone}) in the weight space.
The set of generators $\{\ell_1^B, \ldots, \ell_m^B\}$ is 
a Gr\"{o}bner basis with respect to the term order defined by 
the weight vector $(0, \ldots, 0, 1, \ldots, 1)$ and the tie-breaker $<_w$.
\end{remark}

Next, 
We derive a Gr\"{o}bner basis for $\Dan$ ideal $\mathcal{I}_A(m)$.  
\begin{theorem} \label{thm-gr-a}
We define a monomial order on $\OrdZeroOne'$ on $\Dan$ as follows.
The relation 
$$ x_1^{\alpha_1} \cdots x_m^{\alpha_m} \xi_1^{\beta_1} \cdots \xi_m^{\beta_m} 
   \OrdZeroOne' x_1^{\alpha_1'} \cdots x_m^{\alpha_m'} \xi_1^{\beta_1'} \cdots \xi_m^{\beta_m'}$$
holds if and only if one of the following case holds:
\begin{enumerate}
\item $\beta_1 + \cdots + \beta_m < \beta_1' + \cdots + \beta_m'$ 
\item $\beta_1 + \cdots + \beta_m = \beta_1' + \cdots + \beta_m'$ and 
      $\alpha_1 + \cdots + \alpha_m > \alpha_1' + \cdots + \alpha_m'$
\item $\beta_1 + \cdots + \beta_m = \beta_1' + \cdots + \beta_m'$ and
      $\alpha_1 + \cdots + \alpha_m = \alpha_1' + \cdots + \alpha_m'$ and \\
      $x_1^{\alpha_1} \cdots x_m^{\alpha_m} \xi_1^{\beta_1} \cdots \xi_m^{\beta_m} 
      <'x_1^{\alpha_1'} \cdots x_m^{\alpha_m'} \xi_1^{\beta_1'} \cdots \xi_m^{\beta_m'}$. 
      Here, $<'$ is a term order such as the lexicographic order.
\end{enumerate} 
Then, 
the set $\{\ell_1^A, \ldots, \ell_m^A\}$ is 
a Gr\"{o}bner basis for $\Dan$ ideal $\mathcal{I}_A(m)$ with respect to $\OrdZeroOne'$.
\end{theorem}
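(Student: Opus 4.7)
My plan is to follow the strategy of the proof of Theorem \ref{thm-gr-b}: identify the initial term of each $\ell_i^A$ under $\OrdZeroOne'$, observe that these initial terms are pairwise coprime so that Lemma \ref{lem-cri-d} applies (its proof is purely algebraic and transfers verbatim from $D$ to $\Dan$), and then verify that every commutator $[\ell_i^A, \ell_j^A]$ reduces to zero. Since the generators are polynomials and $\OrdZeroOne'$ is an order of type $(\mathbf{0}, \mathbf{1})$, the entire S-pair computation can be carried out inside $D$, and the Buchberger criterion in the $\Dan$-setting (developed in \cite{SST}) lifts the conclusion to $\mathcal{I}_A(m)$.

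The first step is to compute $\inn_{\OrdZeroOne'}(\ell_i^A)$. Writing $\ell_i^A$ in normal form, the monomials of maximal total $\xi$-degree $2$ are $x_i^2\xi_i^2$ (from $\theta_i^2$), $-x_i^3\xi_i^2$ (from $-x_i\theta_i^2$), and $-x_i^2 x_k \xi_i \xi_k$ for $k \neq i$ (from $-x_i \theta_k \theta_i$). Ties in $\xi$-degree are broken under $\OrdZeroOne'$ by preferring \emph{smaller} total $x$-degree, so the initial term is the unique candidate of $x$-degree $2$, namely $x_i^2 \xi_i^2$. For $i \neq j$, the monomials $x_i^2\xi_i^2$ and $x_j^2\xi_j^2$ are relatively prime, so Lemma \ref{lem-cri-d} reduces the task to showing that $[\ell_i^A,\ell_j^A]$ is reduced to $0$ by $\{\ell_1^A,\ldots,\ell_m^A\}$.

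The heart of the proof is to show that this commutator in fact vanishes identically. Write $\ell_i^A = A_i - x_i B_i$ with $A_i = \theta_i(\theta_i+c_i-1)$ and $B_i = (\theta_1+\cdots+\theta_m+a)(\theta_i+b_i)$. Both $A_i$ and $B_i$ lie in the commutative polynomial ring generated by $\theta_1,\ldots,\theta_m$, so $[A_i,A_j]=[A_i,B_j]=[B_i,B_j]=0$. Because $[\theta_i,x_j]=0$ for $i\neq j$, the cross terms $[A_i,x_jB_j]$ and $[x_iB_i,A_j]$ also vanish. For the remaining term, the identity $[\theta_1+\cdots+\theta_m, x_j] = x_j$ yields $B_i x_j = x_j(\theta_1+\cdots+\theta_m+a+1)(\theta_i+b_i)$, and by symmetry $[x_iB_i,x_jB_j]$ equals $x_ix_j(\theta_1+\cdots+\theta_m+a+1)$ times $(\theta_i+b_i)(\theta_1+\cdots+\theta_m+a)(\theta_j+b_j)-(\theta_j+b_j)(\theta_1+\cdots+\theta_m+a)(\theta_i+b_i)$. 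The latter vanishes because its three factors all commute, so $[\ell_i^A,\ell_j^A]=0$.

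By Lemma \ref{lem-cri-d}, every S-pair reduces to $0$, and Buchberger's criterion completes the proof. I expect the main obstacle to be the bookkeeping in the commutator computation, together with the identification of the initial term under the unusual rule of $\OrdZeroOne'$ (which inverts the $x$-degree preference compared to $\OrdZeroOne$). The structural reason the argument works is that the coupling between $\ell_i^A$ and $\ell_j^A$ factors through the commuting Euler operators, leaving no residue that would require further reduction; this is in fact cleaner than the analogous computation for $I_B(m)$.
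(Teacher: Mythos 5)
Your proposal is correct and follows essentially the same route as the paper: identify $\inn_{\OrdZeroOne'}(\ell_i^A)=x_i^2\xi_i^2$, note the initial terms are pairwise coprime so the $\Dan$-analogue of Lemma~\ref{lem-cri-d} applies, observe $[\ell_i^A,\ell_j^A]=0$, and invoke Buchberger's criterion in $\Dan$. The only difference is that you supply the (correct) explicit verification that $[\ell_i^A,\ell_j^A]=0$, which the paper asserts without computation.
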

Before giving a proof, we note that
for the monomial order $\OrdZeroOne'$ in the ring $\mathcal{D}$, 
an analogous lemma as Lemma \ref{lem-cri-d} also holds.
\begin{proof}
We need to prove 
the $S$-pair of $\ell_i^A$ and $\ell_j^A ~ (1 \leq i < j \leq m)$ is reduced to $0$.
Since the initial terms $\inn_{\OrdZeroOne'}(\ell_i^A) = x_i^2 \xi_i^2$ and 
$\inn_{\OrdZeroOne'}(\ell_j^A) = x_j^2 \xi_j^2$ are relatively prime, 
we can use the analogous lemma as Lemma \ref{lem-cri-d}.
The commutator $[\ell_i^A, \ell_j^A] = 0$.
The $S$-pair $S_\OrdZeroOne'(\ell_i^A, \ell_j^A)$ is reduced to $0$.
By the Buchberger's criterion with respect to a monomial order $\OrdZeroOne'$ in $\Dan$
\cite{Castro0}, \cite[Th 1.4.]{OSSingLocus},  
the set $\{\ell_1^A, \ldots, \ell_m^A\}$ is a Gr\"{o}bner basis.
\end{proof}
For $\Dan$ ideal $\mathcal{I}_C(m)$, 
we can also derive a Gr\"{o}bner basis analogously. 
\begin{theorem}
The set $\{\ell_1^C, \ldots, \ell_m^C\}$
is a Gr\"{o}bner basis for $\Dan$ ideal $\mathcal{I}_C(m)$ with respect to $\OrdZeroOne'$.
\end{theorem}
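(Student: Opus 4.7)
The plan is to follow the proof of Theorem \ref{thm-gr-a} essentially line by line. By Buchberger's criterion in $\Dan$ together with the analog of Lemma \ref{lem-cri-d} for the order $\OrdZeroOne'$ in $\Dan$, it suffices to compute the initial monomials $\inn_{\OrdZeroOne'}(\ell_i^C)$, check that they are pairwise coprime, and verify that the commutator $[\ell_i^C, \ell_j^C]$ vanishes for $i \neq j$.

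For the first task, I would expand $\ell_i^C$. The piece $\theta_i(\theta_i + c_i - 1)$ rewrites as $x_i^2 \p_i^2 + c_i x_i \p_i$ and contributes exactly one $\xi$-degree-two monomial, namely $x_i^2 \xi_i^2$, of $x$-degree $2$. Every $\xi$-degree-two monomial in $x_i(\theta_1+\cdots+\theta_m+a)(\theta_1+\cdots+\theta_m+b)$ arises from an $x_i \theta_k \theta_l$ and hence has the form $x_i x_k x_l \xi_k \xi_l$, of $x$-degree $3$. Because $\OrdZeroOne'$ first maximizes the total $\xi$-degree and then minimizes the total $x$-degree, the winner is $\inn_{\OrdZeroOne'}(\ell_i^C) = x_i^2 \xi_i^2$, and for $i \neq j$ these are obviously coprime, so the analog of Lemma \ref{lem-cri-d} reduces $S_{\OrdZeroOne'}(\ell_i^C, \ell_j^C)$ to $-[\ell_i^C, \ell_j^C]$.

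For the commutator, I write $E = \theta_1 + \cdots + \theta_m$, $P_i = \theta_i(\theta_i+c_i-1)$, $Q_i = x_i(E+a)(E+b)$, so $\ell_i^C = P_i - Q_i$. Three of the four pieces $[P_i,P_j]$, $[P_i,Q_j]$, $[Q_i,P_j]$ vanish for $i \neq j$ because $[\theta_i, \theta_j] = [\theta_i, x_j] = [\theta_i, E] = 0$. The remaining bracket $[Q_i, Q_j]$ is handled by the shift identity $(E+a)(E+b)\, x_j = x_j\,(E+a+1)(E+b+1)$, a routine consequence of $[E, x_j] = x_j$. Applying this to both $x_i(E+a)(E+b) x_j (E+a)(E+b)$ and $x_j(E+a)(E+b) x_i (E+a)(E+b)$ reduces each of them to $x_i x_j (E+a+1)(E+b+1)(E+a)(E+b)$, so $[Q_i, Q_j] = 0$.

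Hence $[\ell_i^C, \ell_j^C] = 0$, every $S$-pair reduces to $0$, and Buchberger's criterion in $\Dan$ concludes the proof. The only nontrivial step is the cancellation in $[Q_i, Q_j]$; it is forced by the fact that $(E+a)(E+b)$ appears symmetrically in both $\ell_i^C$ and $\ell_j^C$, so no genuine obstacle is expected.
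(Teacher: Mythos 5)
Your proposal is correct and follows exactly the route the paper intends, namely the same pattern as the proof of Theorem \ref{thm-gr-a}: the initial forms $\inn_{\OrdZeroOne'}(\ell_i^C)=x_i^2\xi_i^2$ are pairwise coprime, the analog of Lemma \ref{lem-cri-d} reduces each $S$-pair to the commutator, and $[\ell_i^C,\ell_j^C]=0$ because of the shift identity $(E+a)(E+b)x_j = x_j(E+a+1)(E+b+1)$. The paper merely states this theorem as an analogue and omits the details; your verification of the vanishing commutator supplies exactly the missing computation.
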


\begin{remark}
We could not derive 
Gr\"{o}bner bases for $D$ ideals $I_A(m), I_C(m)$ and the $D$ ideal for $F_D$ 
with respect to the term order $\OrdZeroOne$.
Gr\"{o}bner bases for these ideals seem to be more complicated from computer experiments.
This is why we discuss on a Gr\"{o}bner basis in the ring $\mathcal{D}$ to study the singular locus. 
\end{remark}

\section{Singular locus of Lauricella's system $I_A(m), I_B(m)$}
Hattori and Takayama \cite{HT} determined the singular locus of Lauricella's system $I_C(m)$ 
by using Gr\"{o}bner basis, syzygies and cohomological solutions.
We determine the singular locus of Lauricella's system  $I_A(m), I_B(m)$ by using 
the obtained Gr\"{o}bner bases.

\subsection{Singular locus of Lauricella's system $I_B(m)$}
We compute singular locus of Lauricella's system $I_B(m)$.
By Theorem \ref{thm-gr-b}, 
a Gr\"{o}bner basis of $I_B(m)$ with respect to $\OrdZeroOne$ is $\{\ell_1^B, \ldots, \ell_m^B\}$.
We define a weight vector $\WZeroOne = (0, \ldots, 0, 1, \ldots, 1) \in {\mathbb{Z}}^{2m}$.
In other words, we set that the weight of $x_i$ is 0 and that of $\xi_i$ is $1$.
We define the $\WZeroOne$ initial form $\innZeroOne(P)$ of the differential operator $P$ by 
the sum of the terms in $P$ which has the highest $\WZeroOne$ weight.
In other words, when
$P = \sum_{\alpha, \beta \in (\Zzero)^m} c_{\alpha,\beta} x^\alpha \p^\beta \in D$, 
the $\WZeroOne$ initial form of $P$ is 
$$ \innZeroOne(P) = \sum_{\WZeroOne \cdot (\alpha, \beta) \text{ is maximum in } P} c_{\alpha, \beta} x^\alpha \xi^\beta.$$
For a $D$ ideal $I$, 
the $\WZeroOne$ initial form ideal is defined by the $\C[x,\xi]$ ideal   
$$ \innZeroOne(I) = \langle \innZeroOne(P) ~|~ P \in I \rangle.$$
By the property of Gr\"{o}bner basis with respect to $\OrdZeroOne$, 
the $\WZeroOne$ initial form ideal $\innZeroOne(I_B(m))$ are generated by 
$\innZeroOne(\ell_1^B),  \ldots, \innZeroOne(\ell_m^B)$.
The $\WZeroOne$ initial form of $\ell_i^B$ is 
$$ \innZeroOne(\ell_i^B) = x_i \xi_i \left(x_i(1-x_i)\xi_i + \sum_{1 \leq j \leq m, j \neq i} x_j \xi_j \right).$$
We denote the initial form by $L_i^B$.
By the Oaku's result \cite[Proposition 1 in Section 2]{OakuSingLocus} and Theorem \ref{thm-gr-b}, 
we have the following proposition.
\begin{proposition} \label{prop-char-b}
The characteristic variety for $D$ ideal $I_B(m)$ is 
${\rm Ch}(I_B(m)) = {\bf V}(L_1^B, \ldots, L_m^B).$
\end{proposition}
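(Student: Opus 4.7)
The plan is to apply Oaku's theorem relating Gr\"{o}bner bases to characteristic varieties (the cited \cite[Proposition 1 in Section 2]{OakuSingLocus}) directly to the Gr\"{o}bner basis established in Theorem \ref{thm-gr-b}. The argument breaks into three short steps: verify that $\OrdZeroOne$ is a term order refining the weight order determined by $\WZeroOne$; record that $\innZeroOne(\ell_i^B) = L_i^B$; and then read off the characteristic variety from its definition as the zero set of the $\WZeroOne$-initial ideal.

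The first step is essentially a definitional check. The primary sorting criterion in the definition of $\OrdZeroOne$ is the total $\xi$-degree $\beta_1 + \cdots + \beta_m$, which coincides with the $\WZeroOne$-weight of $x^\alpha \xi^\beta$ since $x_i$ has weight $0$ and $\xi_i$ has weight $1$. Hence $\OrdZeroOne$ refines $\WZeroOne$, so the hypothesis of Oaku's theorem is met. Combined with Theorem \ref{thm-gr-b}, this yields that $\innZeroOne(I_B(m))$ is generated as a $\C[x, \xi]$-ideal by $\innZeroOne(\ell_1^B), \ldots, \innZeroOne(\ell_m^B)$.

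The second step is the initial-form computation, already recorded in the discussion preceding the proposition. To confirm it one expands $\theta_j = x_j \p_j$ in $\ell_i^B$, moves all $\p$'s to the right via $\p_j x_j = x_j \p_j + 1$, and reads off the terms of top $\p$-degree (which is $2$). The contributions of that top weight are $x_i^2 \xi_i^2$ from $\theta_i^2$, $x_i x_j \xi_i \xi_j$ from each $\theta_i \theta_j$ with $j \neq i$, and $-x_i^3 \xi_i^2$ from $-x_i \theta_i^2$. Collecting them gives exactly $L_i^B = x_i \xi_i\bigl(x_i(1-x_i)\xi_i + \sum_{j \neq i} x_j \xi_j\bigr)$.

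Combining these two steps, $\innZeroOne(I_B(m)) = \langle L_1^B, \ldots, L_m^B \rangle$, and since the characteristic variety of a $D$-ideal is by definition the zero set of its $\WZeroOne$-initial ideal, one concludes ${\rm Ch}(I_B(m)) = {\bf V}(L_1^B, \ldots, L_m^B)$. There is no real obstacle in this proof: the content sits entirely in Theorem \ref{thm-gr-b} and in Oaku's theorem. The only point requiring explicit comment is the weight-compatibility of $\OrdZeroOne$ with $\WZeroOne$, and that is transparent from the definitions.
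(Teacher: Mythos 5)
Your proposal is correct and matches the paper's argument: the paper likewise invokes Theorem~\ref{thm-gr-b} to conclude that $\innZeroOne(I_B(m))$ is generated by the $L_i^B$, then applies Oaku's result to identify the characteristic variety with the zero set of that initial ideal. You merely spell out two points the paper leaves implicit, namely that $\OrdZeroOne$ refines the $\WZeroOne$-weight and the direct verification of $\innZeroOne(\ell_i^B) = L_i^B$.
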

The singular locus of $I_B(m)$ is defined by 
$$ {\rm Sing}(I_B(m)) = \pi({\rm Ch}(I_B(m)) \setminus \{\xi_1 = \cdots = \xi_m = 0\}).$$
Here, $\pi$ is the projection 
$\C^{2m} \ni (x_1, \ldots, x_m, \xi_1, \ldots, \xi_m) \mapsto (x_1, \ldots, x_m) \in \C^m$.
In order to compute the singular locus, 
we need to compute the solution $(x_1, \ldots, x_m, \xi_1, \ldots, \xi_m)$ with 
$(\xi_1, \ldots, \xi_m) \neq (0, \ldots, 0)$ for 
$$ L_1^B = 0, \ldots, L_m^B = 0 $$ 
i.e., 
\begin{equation} \label{sing-eq-b}
x_i \xi_i = 0 \text{~ or ~} x_i(1-x_i) \xi_i + \sum_{1\leq k \leq m, k \neq i} x_k \xi_k = 0 \quad (i = 1, \ldots, m). 
\end{equation}
The singular locus is the projection by $\pi$ of these solution.
We can rewrite the equation (\ref{sing-eq-b}) as 
\begin{equation} \label{sing-eq-b2}
x_i(1-\varepsilon_i x_i) \xi_i + \sum_{1\leq k \leq m, k \neq i} \varepsilon_i x_k \xi_k = 0 \quad (i = 1, \ldots, m, \varepsilon_i \in \{0, 1\}).
\end{equation}
We fix an $\varepsilon = (\varepsilon_1, \ldots, \varepsilon_m) \in \{0,1\}^m$.
The equation (\ref{sing-eq-b2}) is 
$$
\begin{pmatrix}
x_1(1-\varepsilon_1x_1) & \varepsilon_1 x_2      & \cdots & \varepsilon_1 x_m \\
\varepsilon_2 x_1       &x_2(1-\varepsilon_2x_2) & \cdots & \varepsilon_2 x_m \\
\vdots                  &                       &        &   \vdots    \\
\varepsilon_m x_1       & \varepsilon_m x_2      & \cdots & x_m(1-\varepsilon_m x_m) 
\end{pmatrix}
\begin{pmatrix}
\xi_1 \\
\xi_2 \\
\vdots \\
\xi_m 
\end{pmatrix}
= 
\begin{pmatrix}
0\\
0\\
\vdots\\
0
\end{pmatrix}.
$$
We denote the coefficient matrix by $A_\varepsilon$.
The equation (\ref{sing-eq-b2}) has a solution $(x_1, \ldots, x_m, \xi_1, \ldots, \xi_m)$ with 
$(\xi_1, \ldots, \xi_m) \neq (0, \ldots, 0)$ 
if and only if 
${\rm det}(A_\varepsilon) = 0$ holds.
The defining polynomial for the singular locus is $\prod_{\varepsilon \in \{0,1\}^m}{\rm det}(A_\varepsilon)$.

We compute $\prod_{\varepsilon \in \{0,1\}^m}{\rm det}(A_\varepsilon)$.
We set 
$$
f_m(x_1, \ldots, x_m ; \varepsilon_1, \ldots, \varepsilon_m) = 
{\rm det}
\begin{pmatrix}
1-\varepsilon_1x_1  & \varepsilon_1       & \cdots & \varepsilon_1 \\
\varepsilon_2       & 1-\varepsilon_2x_2  & \cdots & \varepsilon_2 \\
\vdots              & \vdots              &        &   \vdots    \\
\varepsilon_m       & \varepsilon_m       & \cdots & 1-\varepsilon_m x_m 
\end{pmatrix}.
$$ 
Then, the relation 
$ 
{\rm det}(A_\varepsilon) = x_1 \cdots x_m f_m(x_1, \ldots, x_m ; \varepsilon_1, \ldots, \varepsilon_m)
$
holds.
We have 
\begin{align*}
f_m(x_1, \ldots, x_m ; 0, \varepsilon_2, \ldots, \varepsilon_m) =& 
  f_{m-1}(x_2, \ldots, x_m ; \varepsilon_2, \ldots, \varepsilon_m), \\ 
f_m(x_1, \ldots, x_m ; \varepsilon_1, 0, \varepsilon_3, \ldots, \varepsilon_m) =& 
  f_{m-1}(x_1, x_3, \ldots, x_m ; \varepsilon_1, \varepsilon_3, \ldots, \varepsilon_m), \\
& \vdots \\ 
f_m(x_1, \ldots, x_m ; \varepsilon_1, \ldots, \varepsilon_{m-1}, 0) =& f_{m-1}(x_1, \ldots, x_{m-1} ; \varepsilon_1, \ldots, \varepsilon_{m-1}), \\ 
f_m(x_1, \ldots, x_m ; 1, \ldots, 1) = &(-1)^{m-1}((1-x_1)x_2 \cdots x_m + x_1 x_3 \cdots x_m + \cdots \\
  &+ x_1 \cdots x_{m-1}).
\end{align*}
By using these relation, we have
\begin{align*}
& \prod_{\varepsilon \in \{0,1\}^m} f_m(x_1, \ldots, x_m ; \varepsilon_1, \ldots, \varepsilon_m) \\
&=\prod_{\#\{1 \leq i \leq m|\varepsilon_i \neq 0 \}= 1} f_m \prod_{\#\{1 \leq i \leq m|\varepsilon_i \neq 0\}=2} f_m \cdots 
  \prod_{\#\{1 \leq i \leq m|\varepsilon_i \neq 0\} = m} f_m \\
&=\prod_{1 \leq i_1 \leq m} f_1(x_{i_1};1) \prod_{1 \leq i_1 < i_2 \leq m} f_2(x_{i_1}, x_{i_2};1,1) \cdots f_m(x_1, \ldots, x_m; 1, \ldots, 1) \\
&=\prod_{1 \leq i_1 \leq m} (1-x_{i_1}) \prod_{1 \leq i_1 < i_2 \leq m} (x_{i_1} x_{i_2} - x_{i_1} - x_{i_2}) \cdots \\ 
&\quad \quad (-1)^{m-1}(-x_1 x_2 \cdots x_m + x_2 \cdots x_m + \cdots + x_1 \cdots x_{m-1}).
\end{align*}
These gives the following conclusion.
\begin{theorem}
The singular locus of $F_B$ is 
\begin{align*}
{\rm Sing}(I_B(m)) =& {\bf V}(x_1 \cdots x_m \prod_{1 \leq i_1 \leq m}(1-x_{i_1}) 
  \prod_{1\leq i_1 < i_2 \leq m}(x_{i_1} x_{i_2} - x_{i_1} - x_{i_2}) \cdots  \\
& (x_1 x_2 \cdots x_m - x_2 \cdots x_m - \cdots -x_1 \cdots x_{m-1})).
\end{align*}
\end{theorem}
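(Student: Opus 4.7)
The plan is to follow the chain already sketched in the paragraphs preceding the theorem: pass from the characteristic variety in Proposition \ref{prop-char-b} to a case analysis on the factorization of $L_i^B$, reduce each case to a determinantal condition on a linear system in $\xi$, and then evaluate the resulting product of determinants using the recursions listed for $f_m$. The argument splits naturally into three steps.

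\textbf{Step 1: Disjunction and the matrix $A_\varepsilon$.} Since $L_i^B = x_i\xi_i\bigl(x_i(1-x_i)\xi_i + \sum_{k\ne i} x_k\xi_k\bigr)$, the vanishing of $L_i^B$ forces either $x_i\xi_i = 0$ or $x_i(1-x_i)\xi_i + \sum_{k\ne i} x_k\xi_k = 0$. Indexing the choice at position $i$ by $\varepsilon_i \in \{0,1\}$ gives exactly the unified equation (\ref{sing-eq-b2}), and hence the matrix form $A_\varepsilon(x)\xi = 0$. Thus ${\rm Ch}(I_B(m)) \setminus \{\xi=0\}$ is the union over $\varepsilon \in \{0,1\}^m$ of the loci where $A_\varepsilon$ has a nontrivial kernel.

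\textbf{Step 2: Determinantal defining polynomial.} Projecting to $x$-space via $\pi$, the existence of a nonzero $\xi$ with $A_\varepsilon(x)\xi = 0$ is equivalent to $\det A_\varepsilon(x) = 0$, so set-theoretically ${\rm Sing}(I_B(m)) = \bigcup_\varepsilon {\bf V}(\det A_\varepsilon) = {\bf V}\bigl(\prod_\varepsilon \det A_\varepsilon(x)\bigr)$. Factoring $x_i$ out of the $i$-th column of $A_\varepsilon$ gives $\det A_\varepsilon = x_1 \cdots x_m \cdot f_m(x;\varepsilon)$, so up to radical the singular locus is cut out by $x_1 \cdots x_m \cdot \prod_\varepsilon f_m(x;\varepsilon)$.

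\textbf{Step 3: Evaluating $\prod_\varepsilon f_m(x;\varepsilon)$.} The recursions $f_m(x;\varepsilon) = f_{m-1}(\hat x_i;\hat\varepsilon_i)$ whenever $\varepsilon_i = 0$ show that $f_m(x;\varepsilon)$ depends only on the support $S = \{i : \varepsilon_i = 1\}$, and equals $f_{|S|}(x_S; 1,\ldots,1)$ on the reduced index set. Regrouping the product over $\varepsilon \in \{0,1\}^m$ by this support therefore yields $\prod_{S \subseteq \{1,\ldots,m\}} f_{|S|}(x_S; 1,\ldots,1)$, with $S = \emptyset$ contributing $1$. Substituting the closed form for the all-ones case, $f_k(y_1,\ldots,y_k;1,\ldots,1) = (-1)^{k-1}\bigl(\sum_i \prod_{j\ne i} y_j - \prod_i y_i\bigr)$, produces the singletons $1-x_{i_1}$, the pairs $x_{i_1}x_{i_2}-x_{i_1}-x_{i_2}$, and so on through the full index set, matching the factors in the theorem exactly.

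The main obstacle is purely bookkeeping in Step 3: one has to verify the reduction $f_m(x;\varepsilon) = f_{m-1}(\hat x_i;\hat\varepsilon_i)$ for $\varepsilon_i = 0$ by row/column expansion, confirm the all-ones evaluation by induction (or a direct elimination turning the matrix into an upper-triangular one via subtracting the $i$-th row from the others), and then rewrite the product over $2^m$ choices of $\varepsilon$ as a product over subsets without double counting. None of these sub-steps is deep, but the combinatorial re-indexing from tuples $\varepsilon$ to subsets $S$ is easy to mis-state, and the sign in the all-ones formula must be tracked carefully to match the final theorem.
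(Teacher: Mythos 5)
Your proposal is correct and follows essentially the same route as the paper: pass from the characteristic variety to the disjunction encoded by $\varepsilon$, reduce to the vanishing of $\det A_\varepsilon$, factor $x_1\cdots x_m$ out to isolate $f_m$, and evaluate $\prod_\varepsilon f_m$ via the $\varepsilon_i=0$ reduction and the all-ones formula. The reindexing by support $S$ (with $S=\emptyset$ contributing $1$) is the same as the paper's grouping by $\#\{i:\varepsilon_i\neq 0\}$, just stated slightly more cleanly.
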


\subsection{Singular locus of Lauricella's system $I_A(m)$}
We compute the singular locus of Lauricella's system $I_A(m)$.
In this case, the computation is not straightforward as the case of $I_B(m)$ and 
we need a Gr\"{o}bner basis in the ring $\mathcal{D}$.
We define a weight vector $\WZeroOne = (0, \ldots, 0, 1, \ldots, 1) \in {\mathbb{Z}}^{2m}$. 
The $\WZeroOne$ initial form of $\ell_i^A$ is 
$$ \innZeroOne(\ell_i^A) = x_i \xi_i \left(x_i\xi_i -x_i \sum_{1 \leq j \leq m} x_j \xi_j \right).$$
We denote the initial form by $L_i^A$.
Since $L_i^A$ is an element in the $\WZeroOne$ initial form ideal $\innZeroOne(I_A(m))$, 
we have
$$ \langle L_1^A, \ldots, L_m^A \rangle \subset \innZeroOne(I_A(m)).$$
It holds that
$$ {\bf V}(L_1^A, \ldots, L_m^A) \supset {\rm Ch}(I_A(m)).$$
For the singular locus, we have 
$$ \pi({\bf V}(L_1^A, \ldots, L_m^A) \setminus \{\xi_1= \cdots = \xi_m = 0\}) \supset {\rm Sing}(I_A(m)).$$
We compute  
$\pi({\bf V}(L_1^A, \ldots, L_m^A) \setminus \{\xi_1= \cdots = \xi_m = 0\})$.
In the analogous way as $I_B(m)$, 
we compute the solutions $(x_1, \ldots, x_m, \xi_1, \ldots, \xi_m)$ with 
$(\xi_1, \ldots, \xi_m) \neq (0, \ldots, 0)$ for  
$$ L_1^A = 0, \ldots, L_m^A = 0 $$ 
i.e.,  
\begin{equation} \label{sing-eq-a}
x_i \xi_i = 0 \text{~ or ~} x_i \xi_i - x_i \sum_{1\leq k \leq m} x_k \xi_k = 0 \quad (i = 1, \ldots, m). 
\end{equation} 
The projection by $\pi$ of these solutions  is 
$\pi({\bf V}(L_1^A, \ldots, L_m^A) \setminus \{\xi_1= \cdots = \xi_m = 0\})$.
The equation (\ref{sing-eq-a}) is rewritten as 
\begin{equation} \label{sing-eq-a2}
x_i \xi_i - x_i \varepsilon_i \sum_{1\leq k \leq m} x_k \xi_k = 0 \quad (i = 1, \ldots, m, \varepsilon_i \in \{0, 1\}). 
\end{equation}
We fix an
$\varepsilon = (\varepsilon_1, \ldots, \varepsilon_m) \in \{0,1\}^m$.
The equation (\ref{sing-eq-a2}) is
$$
\begin{pmatrix}
x_1(1-\varepsilon_1x_1) & -\varepsilon_1 x_1 x_2 & \cdots & -\varepsilon_1 x_1 x_m \\
-\varepsilon_2 x_1 x_2  &x_2(1-\varepsilon_2x_2) & \cdots & -\varepsilon_2 x_2 x_m \\
\vdots                  &\vdots                  &        &   \vdots    \\
-\varepsilon_m x_1 x_m  & -\varepsilon_m x_2 x_m & \cdots & x_m(1-\varepsilon_m x_m) 
\end{pmatrix}
\begin{pmatrix}
\xi_1 \\
\xi_2 \\
\vdots \\
\xi_m 
\end{pmatrix}
= 
\begin{pmatrix}
0\\
0\\
\vdots\\
0
\end{pmatrix}.
$$
We denote the coefficient matrix by $B_\varepsilon$.
The equation (\ref{sing-eq-a2}) has a solution $(x_1,\ldots, x_m, \xi_1, \ldots, \xi_m)$ with 
 $(\xi_1, \ldots, \xi_m) \neq (0, \ldots, 0)$ 
if and only if 
${\rm det}(B_\varepsilon) = 0$ holds. 
So we have
$$
\pi({\bf V}(L_1^A, \ldots, L_m^A) \setminus \{\xi_1= \cdots = \xi_m = 0\}) = \prod_{\varepsilon \in \{0,1\}^m}{\rm det}(B_\varepsilon).
$$ 
We compute the determinant.
\begin{align*}
{\rm det}(B_\varepsilon) =&
x_1 \cdots x_m {\rm det}
\begin{pmatrix}
1-\varepsilon_1x_1 & -\varepsilon_1 x_1  & \cdots & -\varepsilon_1 x_1  \\
-\varepsilon_2 x_2  &1-\varepsilon_2x_2 & \cdots & -\varepsilon_2 x_2  \\
\vdots              & \vdots                       &        &   \vdots    \\
-\varepsilon_m x_m  & -\varepsilon_m x_m & \cdots & 1-\varepsilon_m x_m 
\end{pmatrix} \\
=&
x_1 \cdots x_m {\rm det}
\begin{pmatrix}
1-\varepsilon_1x_1 & -1      & -1      & \cdots & -1  \\
-\varepsilon_2 x_2 & 1       & 0       & \cdots & 0  \\
-\varepsilon_3 x_3 & 0       & 1       & \cdots & 0  \\
\vdots             & \vdots  &  \vdots &      &   \vdots    \\
-\varepsilon_m x_m & 0       & 0       & \cdots & 1 
\end{pmatrix} \\
=& x_1 \cdots x_m (1 - \varepsilon_1 x_1 - \varepsilon_2 x_2 - \cdots - \varepsilon_m x_m)
\end{align*}
So we obtain 
\begin{align*}
\prod_{\varepsilon \in \{0,1\}^m}{\rm det}(A_\varepsilon)
=& x_1^{2^m} \cdots x_m^{2^m} \prod_{1 \leq i_1 \leq m} (1-x_{i_1}) 
  \prod_{1 \leq i_1 < i_2 \leq m} (1 - x_{i_1} - x_{i_2}) \cdots \\
 &(1 - x_1 - \cdots - x_m).
\end{align*}
This equation is the defining polynomial of 
$\pi({\bf V}(L_1^A, \ldots, L_m^A) \setminus \{\xi_1= \cdots = \xi_m = 0\})$. 
We have
\begin{align*}
{\rm Sing}(I_A(m)) \subset& {\bf V}(x_1 \cdots x_m \prod_{1 \leq i_1 \leq m} (1-x_{i_1}) 
  \prod_{1 \leq i_1 < i_2 \leq m} (1 - x_{i_1} - x_{i_2}) \cdots \\
  &(1 - x_1 - \cdots - x_m))  
\end{align*}

We prove the reverse inclusion.
We compute the singular locus in an open neighborhood of the origin.
By Theorem \ref{thm-gr-a},
a Gr\"{o}bner basis of $\mathcal{I}_A(m)$ with respect to $\OrdZeroOne'$ is 
$\{\ell_1^A, \ldots, \ell_m^A\}$.
For a $\Dan$ ideal $\mathcal{I}$, 
the $\WZeroOne$ local initial form ideal is defined by the $\C\{x_1, \ldots, x_m\}[\xi_1, \ldots, \xi_m]$ ideal 
$$ \innZeroOne(\mathcal{I}) = \langle \innZeroOne(P) ~|~ P \in \mathcal{I} \rangle.$$
By the property of a Gr\"{o}bner basis with respect to $\OrdZeroOne'$ 
\cite[Theorem in Section 2]{OakuSingLocus}, 
the $\WZeroOne$ local initial form ideal $\innZeroOne(\mathcal{I}_A(m))$ are generated by 
$L_1^A, \ldots, L_m^A$.
By the theorem in \cite[Th 4.1.]{OSSingLocus},  we obtain the following fact.
\begin{proposition}
For the characteristic variety for the $D$ ideal $I_A(m)$, 
there exists an open neighborhood $U$ of $\{x_1 = \cdots = x_m = 0\}$ in $\C^{2m}$ such that 
${\rm Ch}(I_A(m)) \cap U = {\bf V}(L_1^A, \ldots, L_m^A) \cap U$.
\end{proposition}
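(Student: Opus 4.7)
The plan is to read this proposition as a direct corollary of Theorem \ref{thm-gr-a} together with Oaku–Shimoyama's theorem \cite[Th 4.1]{OSSingLocus}. The bridge between the two is the fact that our monomial order $\OrdZeroOne'$ refines the weight $\WZeroOne=(0,\ldots,0,1,\ldots,1)$: its first comparison is exactly the $\xi$-degree, which coincides with the $\WZeroOne$-weight. Consequently, a Gr\"obner basis for $\OrdZeroOne'$ is in particular a Gr\"obner basis with respect to the partial order induced by $\WZeroOne$.

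First I would invoke Theorem \ref{thm-gr-a} to conclude that $\{\ell_1^A,\ldots,\ell_m^A\}$ generates $\mathcal{I}_A(m)$ as a Gr\"obner basis with respect to $\OrdZeroOne'$ in $\Dan$. The standard property of Gr\"obner bases refining a weight vector (in the analytic setting this is \cite[Theorem in Section 2]{OakuSingLocus}) then yields
$$
\innZeroOne(\mathcal{I}_A(m)) \;=\; \langle \innZeroOne(\ell_1^A),\ldots,\innZeroOne(\ell_m^A)\rangle \;=\; \langle L_1^A,\ldots,L_m^A\rangle
$$
as an ideal in $\C\{x_1,\ldots,x_m\}[\xi_1,\ldots,\xi_m]$, using the explicit computation $\innZeroOne(\ell_i^A)=L_i^A$ already carried out in the subsection.

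Second I would apply \cite[Th 4.1]{OSSingLocus}, which says that for an algebraic $D$-ideal $I$ whose analytification $\Dan\cdot I$ admits a Gr\"obner basis in $\Dan$ with respect to an order refining $\WZeroOne$, the characteristic variety of $I$ coincides locally (in a neighborhood of the zero section of the cotangent bundle over $\{x=0\}$, i.e.\ in an open set $U\subset\C^{2m}$ containing $\{x_1=\cdots=x_m=0\}$) with the zero locus of the $\WZeroOne$-initial ideal of $\Dan\cdot I$. Combining this with the identification of the initial ideal above gives
$$
{\rm Ch}(I_A(m))\cap U \;=\; {\bf V}(\innZeroOne(\mathcal{I}_A(m)))\cap U \;=\; {\bf V}(L_1^A,\ldots,L_m^A)\cap U,
$$
which is precisely the asserted equality.

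The only potentially delicate point is verifying that the hypotheses of \cite[Th 4.1]{OSSingLocus} apply to our situation; specifically, that the order $\OrdZeroOne'$ is admissible in $\Dan$ and refines $\WZeroOne$. The former is noted in the remark preceding the proof of Theorem \ref{thm-gr-a}, and the latter is immediate from the lexicographic structure of the three cases defining $\OrdZeroOne'$. No further computation is required, so this step is essentially bookkeeping rather than a genuine obstacle.
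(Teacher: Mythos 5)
Your proof follows exactly the paper's route: apply Theorem \ref{thm-gr-a} to get the Gr\"obner basis of $\mathcal{I}_A(m)$ with respect to $\OrdZeroOne'$, use Oaku's theorem to identify $\innZeroOne(\mathcal{I}_A(m)) = \langle L_1^A,\ldots,L_m^A\rangle$, and then invoke \cite[Th 4.1]{OSSingLocus} for the local description of the characteristic variety. The only difference is that you spell out the bookkeeping (that $\OrdZeroOne'$ refines $\WZeroOne$) which the paper leaves implicit.
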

We compute the solutions $(x_1, \ldots, x_m, \xi_1, \ldots, \xi_m)$ with
$(\xi_1, \ldots, \xi_m) \neq (0, \ldots, 0)$ for $L_1^A = 0, \ldots, L_m^A = 0$.
The projection by $\pi$ of these solutions
in an open neighborhood of the origin  
is the singular locus in the open neighborhood.
The equations $L_i^A = 0 ~ (i = 1, \cdots m)$ have a solution 
$(x_1, \ldots, x_m, \xi_1, \ldots, \xi_m)$ with $(\xi_1, \ldots, \xi_m) \neq (0, \ldots, 0)$
if and only if 
\begin{align*} 
\prod_{\varepsilon \in \{0,1\}^m}{\rm det}(A_\varepsilon)
 &= x_1^{2^m} \cdots x_m^{2^m} \prod_{1 \leq i_1 \leq m} (1-x_{i_1}) 
  \prod_{1 \leq i_1 < i_2 \leq m} (1 - x_{i_1} - x_{i_2}) \cdots  \\
 & \quad (1 - x_1 - \cdots - x_m)
\end{align*}
holds.
Since we are considering an open neighborhood of the origin, 
the factors 
$$1-x_{i_1}, 1 - x_{i_1} - x_{i_2}, \ldots, 1 - x_1 - \cdots - x_m$$
are not $0$.
We have  
$$ {\rm Sing}(I_A(m)) \cap W = {\bf V}(x_1 \cdots x_m) \cap W, $$
where $W$ is an open neighborhood of the origin in $\C^m$.

Next, we compute the singular locus ${\rm Sing}(I_A(m))$ in the complex torus $(\C^*)^m$.
We assume $x_i \neq 0 ~ (i = 1, \ldots, m)$ i.e., $(x_1, \ldots, x_m) \in (\C^*)^m$. 
We apply the change of coordinates $X_i = \frac{1}{x_i} ~ (i = 1, \ldots, m)$.
By the change of coordinates, the differential operator $\ell_i^A$ changes to   
$$ p_i^A  = X_i \theta_{X_i} (-\theta_{X_i} + c_i - 1) + (\theta_{X_i} - a)(\theta_{X_i} - b_i).$$
We set the $D$ ideal 
$$ I_A(m)' = D \cdot \{p_1^A, \ldots, p_m^A\}.$$
Here, we set $D = \C[X_1, \ldots, X_m]\langle \p_{X_1}, \ldots, \p_{X_m} \rangle$,
i.e., we change the variables $x_i, \p_i$ for $X_i, \p_{X_i}$.
\begin{proposition} \label{prop-gr-a2}
We use the term order $\OrdZeroOne$ defined in Theorem \ref{thm-gr-b}. 
Here, we change the variable $x_i, \p_i$ for $X_i, \p_{X_i}$. 
The set $\{p_1^A, \ldots, p_m^A \}$ is 
a Gr\"{o}bner basis for the $D$ ideal $I_A(m)'$ with respect to $\OrdZeroOne$.
\end{proposition}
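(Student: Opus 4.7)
The plan is to mimic exactly the proof of Theorem \ref{thm-gr-b}: compute the initial terms of the generators with respect to $\OrdZeroOne$, observe that those initial terms are pairwise coprime, and then invoke Lemma \ref{lem-cri-d} together with Buchberger's criterion, reducing the whole thing to a commutator computation.

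First I would expand $p_i^A$ in the form $\sum c_{\alpha,\beta} X_i^\alpha \p_{X_i}^\beta$ by using $\theta_{X_i} = X_i \p_{X_i}$ and $\theta_{X_i}^2 = X_i^2 \p_{X_i}^2 + X_i \p_{X_i}$. The only contribution of $\xi$-degree $2$ comes from the highest-order terms of $X_i \theta_{X_i}(-\theta_{X_i}+c_i-1)$ and $(\theta_{X_i}-a)(\theta_{X_i}-b_i)$, namely $-X_i^3 \p_{X_i}^2$ and $X_i^2 \p_{X_i}^2$ respectively. Under $\OrdZeroOne$ (order first by total $\xi$-degree, then by total $X$-degree), the dominant one is $-X_i^3 \p_{X_i}^2$, so
\[
\inn_{\OrdZeroOne}(p_i^A) = X_i^3 \xi_i^2
\]
up to a nonzero scalar. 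For $i \neq j$, the monomials $X_i^3 \xi_i^2$ and $X_j^3 \xi_j^2$ involve disjoint variables, hence are relatively prime.

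Next I would apply Lemma \ref{lem-cri-d} to conclude that $S_{\OrdZeroOne}(p_i^A, p_j^A)$ reduces, modulo $\{p_i^A, p_j^A\}$, to $-[p_i^A, p_j^A]$. The key observation, which makes this case easier than Theorem \ref{thm-gr-b}, is that after the change of coordinates $X_i = 1/x_i$ the operator $p_i^A$ lies entirely in the subring $\C[X_i]\langle \p_{X_i}\rangle$: it contains neither $X_j$ nor $\p_{X_j}$ for $j \neq i$. Therefore $p_i^A$ and $p_j^A$ act on disjoint sets of variables, and
\[
[p_i^A, p_j^A] = 0 \qquad (i \neq j).
\]

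Putting these together, every S-pair $S_{\OrdZeroOne}(p_i^A, p_j^A)$ with $i < j$ reduces to $0$ by $\{p_i^A, p_j^A\}$, so by Buchberger's criterion $\{p_1^A, \ldots, p_m^A\}$ is a Gr\"{o}bner basis for $I_A(m)'$ with respect to $\OrdZeroOne$. There is no genuine obstacle here; the only point that must be checked carefully is the explicit expansion establishing that the $\OrdZeroOne$-leading term of $p_i^A$ is $X_i^3 \xi_i^2$ (in particular, that the contribution from $X_i \theta_{X_i}^2$ dominates the one from $\theta_{X_i}^2$ because of its higher total $X$-degree), after which the argument is identical in structure to the proof of Theorem \ref{thm-gr-b}.
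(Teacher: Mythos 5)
Your overall strategy---compute the $\OrdZeroOne$-initial terms, observe that they are pairwise coprime, invoke Lemma~\ref{lem-cri-d}, and reduce the commutator---is exactly the paper's intended approach (the paper simply says the proof is analogous to Theorem~\ref{thm-gr-b}), and your identification of $\inn_{\OrdZeroOne}(p_i^A) = X_i^3\xi_i^2$ (up to a nonzero scalar) is correct. But your key simplification, that $p_i^A$ lies in $\C[X_i]\langle\p_{X_i}\rangle$ and therefore $[p_i^A, p_j^A]=0$, is wrong. You were misled by a typo in the paper's displayed formula: carrying out the substitution $X_i = 1/x_i$, $\theta_{x_i} = -\theta_{X_i}$ in $\ell_i^A = \theta_i(\theta_i + c_i - 1) - x_i(\theta_1 + \cdots + \theta_m + a)(\theta_i + b_i)$ and multiplying by $-X_i$ gives
\[
p_i^A = X_i\theta_{X_i}(-\theta_{X_i} + c_i - 1) + \bigl(\theta_{X_1} + \cdots + \theta_{X_m} - a\bigr)\bigl(\theta_{X_i} - b_i\bigr),
\]
i.e.\ the first factor in the second summand should be $\theta_{X_1}+\cdots+\theta_{X_m}-a$, not $\theta_{X_i}-a$. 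This is confirmed by the paper's own initial form $\innZeroOne(p_i^A) = X_i\xi_{X_i}\bigl(X_i(1-X_i)\xi_{X_i} + \sum_{j\neq i} X_j\xi_{X_j}\bigr)$, which visibly depends on every variable and could not arise from an operator in $\C[X_i]\langle\p_{X_i}\rangle$.

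Consequently $p_i^A$ and $p_j^A$ do not act on disjoint variables, and $[p_i^A,p_j^A]\neq 0$; it is a nontrivial operator analogous to $[\ell_i^B,\ell_j^B] = x_i(\theta_i+a_i)(\theta_i+b_i)\theta_j - x_j(\theta_j+a_j)(\theta_j+b_j)\theta_i$ in Theorem~\ref{thm-gr-b}, and you must carry out the corresponding reduction by $p_i^A$ and $p_j^A$ to conclude that it reduces to $0$. Once that step is supplied, the rest of your argument (coprime initial terms, Lemma~\ref{lem-cri-d}, Buchberger) goes through unchanged.
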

We can analogously prove the proposition as Theorem \ref{thm-gr-b}. 
We compute the singular locus of ${\rm Sing}(I_A(m)')$.
By the property of a Gr\"{o}bner basis with respect to $\OrdZeroOne$, 
the $\WZeroOne$ initial form ideal $\innZeroOne(I_A(m)')$ is generated by 
$\innZeroOne(p_1^A),  \ldots, \innZeroOne(p_m^A)$.
The $\WZeroOne$ initial form of $p_i^A$ is 
$$ \innZeroOne(p_i^A) = X_i \xi_{X_i} \left(X_i(1-X_i)\xi_{X_i} + \sum_{1 \leq j \leq m, j \neq i} X_j \xi_{X_j} \right).$$ 
We denote it by $P_i^A$.
Since $P_i^A$ is equal to $L_i^B$, 
$\innZeroOne(I_A(m)')$ is equal to $\innZeroOne(I_B(m))$.
By the result of ${\rm Sing}(I_B(m))$, we have   
\begin{align*}
{\rm Sing}(I_A(m)') =& {\bf V}(X_1 \cdots X_m \prod_{1 \leq i_1 \leq m}(1-X_{i_1})  
  \prod_{1\leq i_1 < i_2 \leq m}(X_{i_1} X_{i_2} - X_{i_1} - X_{i_2})  \\
& \cdots (X_1 X_2 \cdots X_m - X_2 \cdots X_m - \cdots -X_1 \cdots X_{m-1})). 
\end{align*}
The relation $X_i \neq 0$ holds and we apply the change of coordinates $X_i = \frac{1}{x_i}$, 
we have 
\begin{align*}
{\rm Sing}(I_A(m)) \cap (\C^*)^m =& {\bf V}(\prod_{1 \leq i_1 \leq m}(1 - x_{i_1}) 
  \prod_{1\leq i_1 < i_2 \leq m}(1 - x_{i_1} - x_{i_2}) \cdots \\
& (1 - x_1 - \cdots -x_m)). 
\end{align*}

Since 
${\rm Sing}(I_A(m)) \cap W = {\bf V}(x_1 \cdots x_m) \cap W$ 
($W$ is an open neighborhood of the origin) holds, 
we have 
\begin{align*}
{\rm Sing}(I_A(m)) \supset& {\bf V}(x_1 \cdots x_m \prod_{1 \leq i_1 \leq m}(1 - x_{i_1}) 
  \prod_{1\leq i_1 < i_2 \leq m}(1 - x_{i_1} - x_{i_2}) \cdots  \\
& (1 - x_1 - \cdots -x_m)).
\end{align*}
We can prove the reverse inclusion.
\begin{theorem}
The singular locus of $F_A$ is 
\begin{align*}
{\rm Sing}(I_A(m)) =& {\bf V}(x_1 \cdots x_m \prod_{1 \leq i_1 \leq m}(1 - x_{i_1}) 
  \prod_{1\leq i_1 < i_2 \leq m}(1 - x_{i_1} - x_{i_2}) \cdots  \\
& (1 - x_1 - \cdots -x_m)).
\end{align*}
\end{theorem}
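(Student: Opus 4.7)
The plan is to establish the equality by combining the forward inclusion obtained earlier with a reverse inclusion assembled from two complementary local descriptions of ${\rm Sing}(I_A(m))$. The forward inclusion ${\rm Sing}(I_A(m)) \subset \mathbf{V}(x_1 \cdots x_m \prod_{i_1}(1-x_{i_1}) \cdots (1-x_1-\cdots-x_m))$ follows from the containment $\langle L_1^A, \ldots, L_m^A \rangle \subset \innZeroOne(I_A(m))$ together with the determinant evaluation of $\prod_{\varepsilon \in \{0,1\}^m} {\rm det}(B_\varepsilon)$ carried out above.

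For the reverse inclusion, I would combine two local identifications already in hand. First, Theorem \ref{thm-gr-a} together with Oaku's local criterion for the characteristic variety in $\Dan$ provides a neighborhood $W$ of the origin in which ${\rm Sing}(I_A(m)) \cap W = \mathbf{V}(x_1 \cdots x_m) \cap W$. Second, on the complex torus $(\C^*)^m$ the change of coordinates $X_i = 1/x_i$ and Proposition \ref{prop-gr-a2} identify $\innZeroOne(I_A(m)')$ with $\innZeroOne(I_B(m))$; transporting the formula for ${\rm Sing}(I_B(m))$ back to the original variables yields ${\rm Sing}(I_A(m)) \cap (\C^*)^m = \mathbf{V}(\prod_{i_1}(1-x_{i_1}) \cdots (1-x_1-\cdots-x_m)) \cap (\C^*)^m$.

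To pass from these two partial pictures to the global claim, I would invoke a Zariski-closure argument. Each irreducible component of the candidate variety is either a coordinate hyperplane $\{x_i = 0\}$ or a hyperplane $\{1 - \sum_{j \in S} x_j = 0\}$ for a nonempty $S \subset \{1, \ldots, m\}$. The first sort meets $W$ in a Zariski-dense open subset of itself contained in ${\rm Sing}(I_A(m))$; the second sort meets $(\C^*)^m$ in a Zariski-dense open subset of itself contained in ${\rm Sing}(I_A(m))$. Since ${\rm Sing}(I_A(m))$ is Zariski closed, each entire hypersurface lies inside it, and their union gives the reverse inclusion.

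The main obstacle is precisely this closure/covering step: one must verify that every irreducible component of the candidate variety truly meets $W \cup (\C^*)^m$ in a Zariski-dense open subset of itself. This amounts to checking that no hyperplane $\{1 - \sum_{j \in S} x_j = 0\}$ is contained in any coordinate hyperplane, which is immediate, and that $\{x_i = 0\} \cap W$ is nonempty, which follows from $W$ being a neighborhood of the origin. Combining the two inclusions then yields the stated equality.
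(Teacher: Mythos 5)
Your proposal is correct and follows essentially the same route as the paper: the $\subset$ inclusion from the containment $\langle L_1^A,\ldots,L_m^A\rangle\subset\innZeroOne(I_A(m))$ plus the determinant computation, and the $\supset$ inclusion by patching the description near the origin (via the $\Dan$ Gr\"obner basis of Theorem \ref{thm-gr-a}) with the description on $(\C^*)^m$ (via $X_i=1/x_i$ and Proposition \ref{prop-gr-a2}). The paper leaves the final Zariski-closure step nearly implicit; you make it explicit and correctly verify that each irreducible component of the candidate hypersurface meets $W\cup(\C^*)^m$ in a Zariski-dense subset, which is exactly what is needed to conclude.
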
 

\begin{remark}
In the analogous way as Lauricella's system $I_A(m)$, 
we can also determine the singular locus of Lauricella's system $I_C(m)$. 
Hattori and Takayama \cite{HT} determined the singular locus of the system $I_C(m)$, 
but our method is simpler than their method.
\end{remark}

\section*{Acknowledgements}
The author wishes to express his thanks to Prof. Takayama and Prof. Matsumoto for helpful 
comments concerning Remark \ref{rem-gr-b}. 
This work was partially supported by JSPS KAKENHI Grant Number 24740064.

\end{document}